\documentclass[11pt,reqno]{amsart}

\usepackage[T1]{fontenc}
\usepackage{mathptmx,microtype,hyperref,graphicx,dsfont}
\usepackage{amsthm,amsmath,amssymb}
\usepackage[foot]{amsaddr}
\usepackage{enumitem}
\usepackage[nameinlink,capitalise]{cleveref}
\usepackage[font=footnotesize,margin=2cm]{caption}

\crefname{theorem}{Theorem}{Theorems}
\crefname{thm}{Theorem}{Theorems}
\crefname{mainthm}{Theorem}{Theorems}
\crefname{lemma}{Lemma}{Lemmas}
\crefname{lem}{Lemma}{Lemmas}
\crefname{remark}{Remark}{Remarks}
\crefname{claim}{Claim}{Claims}
\crefname{subclaim}{Sub-claim}{Sub-claims}
\crefname{prop}{Proposition}{Propositions}
\crefname{proposition}{Proposition}{Propositions}
\crefname{defn}{Definition}{Definitions}
\crefname{corollary}{Corollary}{Corollaries}
\crefname{conjecture}{Conjecture}{Conjectures}
\crefname{question}{Question}{Questions}
\crefname{chapter}{Chapter}{Chapters}
\crefname{section}{Section}{Sections}
\crefname{figure}{Figure}{Figures}

\theoremstyle{plain}

\newtheorem{thm}{Theorem}
\newtheorem*{thm*}{Theorem}

\newtheorem{lem}[thm]{Lemma}

\newtheorem{corollary}[thm]{Corollary}

\theoremstyle{definition}
\newtheorem{defn}[thm]{Definition}

\theoremstyle{remark}

%\numberwithin{equation}{section}

\renewcommand{\P}{{\bf P}}
\newcommand{\E}{{\bf E}}

\newcommand{\Z}{{\mathbb Z}}

\newcommand{\G}{{\mathcal G}}
\newcommand{\Gnp}{\G_{n,p}}

\newcommand{\1}{{\bf 1}}

%%%%%%%%%%%%%%
%%%%%%%%%%%%%%
%%%%%%%%%%%%%%
%%%%%%%%%%%%%%

\author[O. Angel]{Omer Angel}
\address{Department of Mathematics, University of British Columbia}
\email{angel@math.ubc.ca}

\author[B. Kolesnik]{Brett Kolesnik}
\address{Department of Statistics, University of California, Berkeley}
\email{bkolesnik@berkeley.edu}

%%%%%%%%%%%%%%%%%%%%%%%%%%%%%%%%%%%%%%
%%%%%%%%%%%%%%%%%%%%%%%%%%%%%%%%%%%%%%
%%%%%%%%%%%%%%%%%%%%%%%%%%%%%%%%%%%%%%
\begin{document}

\title[Large deviations for subcritical bootstrap percolation]
{Large deviations for subcritical bootstrap percolation on the random graph}

\begin{abstract}
We study atypical behavior in bootstrap percolation 
on the Erd\H{o}s--R\'enyi random graph. 
Initially a set $S$ is infected. 
Other vertices are infected once at least $r$ of their
neighbors become infected.  
 Janson et al.\ (2012)
locates the critical size of $S$, above which
it is likely that the infection will spread almost everywhere. 
Below this threshold, a central limit theorem is proved
for the size of the eventually infected set.
In this note, we calculate  
the rate function for the event 
that a small set $S$ eventually infects an unexpected number of vertices,
and identify the least-cost trajectory 
realizing such a large deviation. 
\end{abstract}

\maketitle

%%%%%%%%%%%%%%%%%%%%%%%%
%%%%%%%%%%%%%%%%%%%%%%%%
%%%%%%%%%%%%%%%%%%%%%%%%
%%%%%%%%%%%%%%%%%%%%%%%%
\section{Introduction}
\label{S_intro}

Bootstrap percolation was originally proposed by physicists \cite{PRK75,CLR79}
to model
the phase transition observed in disordered magnets. 
Since then a large literature has developed, motivated 
by beautiful results, e.g.\  \cite{AL88,S92,H03,BBDCM12}, 
and a variety of applications across many fields, see e.g.\ \cite{J91,JL03} and references therein. 

In this work, we consider the spread of an infection by the $r$-neighbor
bootstrap percolation dynamics on the Erd{\H{o}}s--R{\'e}nyi~\cite{ER59}
graph $\Gnp$, in which any two vertices in $[n]$ are neighbors independently
with probability $p$. However, we think our methods are much more 
widely applicable. 
Initially some subset $S_0\subset[n]$ is infected. 
Other vertices are infected once at least $r$ of their
neighbors become infected. 

Most of the literature has focused on the typical behavior; 
specifically, the critical size at which point a uniformly
random initial set $S_0$ is likely to infect most of the graph.  
Less is known about the atypical behavior, such
as when a small set $S_0$ is capable of eventually infecting 
many more vertices than expected (e.g.\ influencers 
or superspreaders in a social network). 
 
For analytical convenience, we rephrase the dynamics in terms of an exploration process 
(cf.\ \cite{S83,ST85,JLTV12})
in which vertices are infected one at a time. At any given step, 
vertices are either 
{\it susceptible,} {\it infected} or {\it healthy}. All susceptible vertices become infected eventually,
and then remain infected. 
When a vertex is infected, other currently healthy vertices may become susceptible.   
The process ends once a stable configuration has been reached in which 
no vertices are susceptible.  

More formally, at each step $t$, there are sets $I_t$ and $S_t$ of
infected and susceptible vertices. 
Initially, $I_0=\emptyset$.
In step $t\ge1$, some vertex $v_t\in S_{t-1}$
is infected. All remaining edges from $v_t$ are revealed. 
To obtain $S_t$ from $S_{t-1}$, we remove $v_t$ and add
all neighbors of $v_t$ with exactly $r-1$ neighbors in $I_{t-1}$. 
We then add $v_t$ to $I_{t-1}$ to obtain $I_t$.
The process ends at step $t_*=\min\{t\ge1:S_t=\emptyset\}$ when no further
vertices can be infected. Let $I_*=I_{t_*}$
denote the eventually infected set. Clearly, $I_*$
does not depend on the order in which vertices are infected. 

Janson et al.\ \cite{JLTV12} 
(cf.\ Vallier's thesis~\cite{V07})
 identifies the critical size of $S_0$, 
for all $r\ge2$
and 
\begin{equation}\label{E_p}
p=((r-1)!/n)^{1/r}\vartheta^{1/r-1}, \quad 1\ll \vartheta\ll n. 
\end{equation}
The extreme cases 
$p\sim c/n$ and $p\sim c/n^{1/r}$
are also addressed in \cite{JLTV12}, where the model
behaves differently. 
We assume \eqref{E_p} throughout this work.

A sharp threshold is observed in \cite{JLTV12}. If
more than $(1-1/r)\vartheta$ vertices are initially susceptible, then 
all except $o(n)$ many vertices are eventually infected. 
Otherwise, 
the eventually infected set is much smaller, of size $O(\vartheta)\ll n$. 

\begin{thm}[\cite{JLTV12} Theorem 3.1]
\label{T_JLTV}
Let $p$ be as in \eqref{E_p} and $\alpha\ge0$. 
Put $\alpha_r=(1-1/r)\alpha$.
Suppose that a set $S_0$ (independent of $\Gnp$) 
of size $|S_0|\sim\alpha_r\vartheta$
is initially susceptible.
If $\alpha>1$, then with high probability 
$|I_*|\sim n$.
If $\alpha<1$, then with high probability 
$|I_*|\sim\varphi_\alpha \vartheta$, where
$\varphi_\alpha\in[\alpha_r,\alpha]$ uniquely satisfies
\begin{equation}\label{E_varphi}
\varphi_\alpha-\varphi_\alpha^r/r=\alpha_r. 
\end{equation} 
\end{thm}

Moreover, in the subcritical case, a central limit theorem is proved
in \cite{JLTV12} (see Theorem 3.8). 
In this work, we study the large deviations. 

\begin{defn}
For 
$\beta< \varphi_\alpha$ (resp.\ $\beta> \varphi_\alpha$), let 
$P(S_0,\beta)$ denote the tail probability that 
the initial susceptibility of $S_0\subset[n]$ in $\Gnp$
results in 
some number 
$|I_*|\le\beta\vartheta$ (resp.\ $|I_*|\ge \beta\vartheta$)
of eventually infected vertices.
\end{defn}

\begin{thm}\label{T_LD}
Let $p$ be as in \eqref{E_p}, $\alpha\in [0,1)$  
and $\beta\neq\varphi_\alpha\in[\alpha_r,1]$. 
Suppose that a set $S_0$ (independent of $\Gnp$) of size $|S_0|\sim\alpha_r\vartheta$
is initially susceptible.  
Then 
\[
\lim_{n\to\infty}\frac{1}{\vartheta}\log P(S_0,\beta)
=\xi(\alpha,\beta)\]
where
\[
\xi(\alpha,\beta)= 
-\beta^r/r+
\begin{cases}
(\beta-\alpha_r)[1+\log(\beta^r/r (\beta-\alpha_r)]&\beta\le \alpha\\
\alpha/r
-(r-2)(\beta-\alpha)
+(r-1)\log(\beta^\beta/\alpha^{\alpha_r})&\beta>\alpha.
\end{cases}
\]
\end{thm}

For any given $\alpha\in[0,1)$,   $\xi(\alpha,\beta)$ is increasing 
in $\beta\in[\alpha_r,\varphi_\alpha)$, decreasing in $\beta\in(\varphi_\alpha,1]$
(see \cref{A_xi}), and 
$\xi(\alpha,\varphi_\alpha)=0$ by \eqref{E_varphi}, in line with \cref{T_JLTV}.
The point $\vartheta$ (associated with $\beta=1$) is critical, see 
\cref{S_heuristics} below.  
As such, 
simply $\xi(\alpha,\beta)=\xi(\alpha,1)$ for $\beta>1$. 
See \cref{F_rate}. 

It might be interesting to investigate the nature of  
$\Gnp$, conditioned on the event that a given $S_0$ eventually infects
a certain number of vertices, or on the existence of such a set $S_0$. 

Finally, let us mention here that Torrisi et al.\ \cite{TGL18} analyze 
large deviations in the 
supercritical case, $\alpha>1$, where typically $|I_*|\sim n$.

\begin{figure}[h!]
\centering
\includegraphics[scale=0.3]{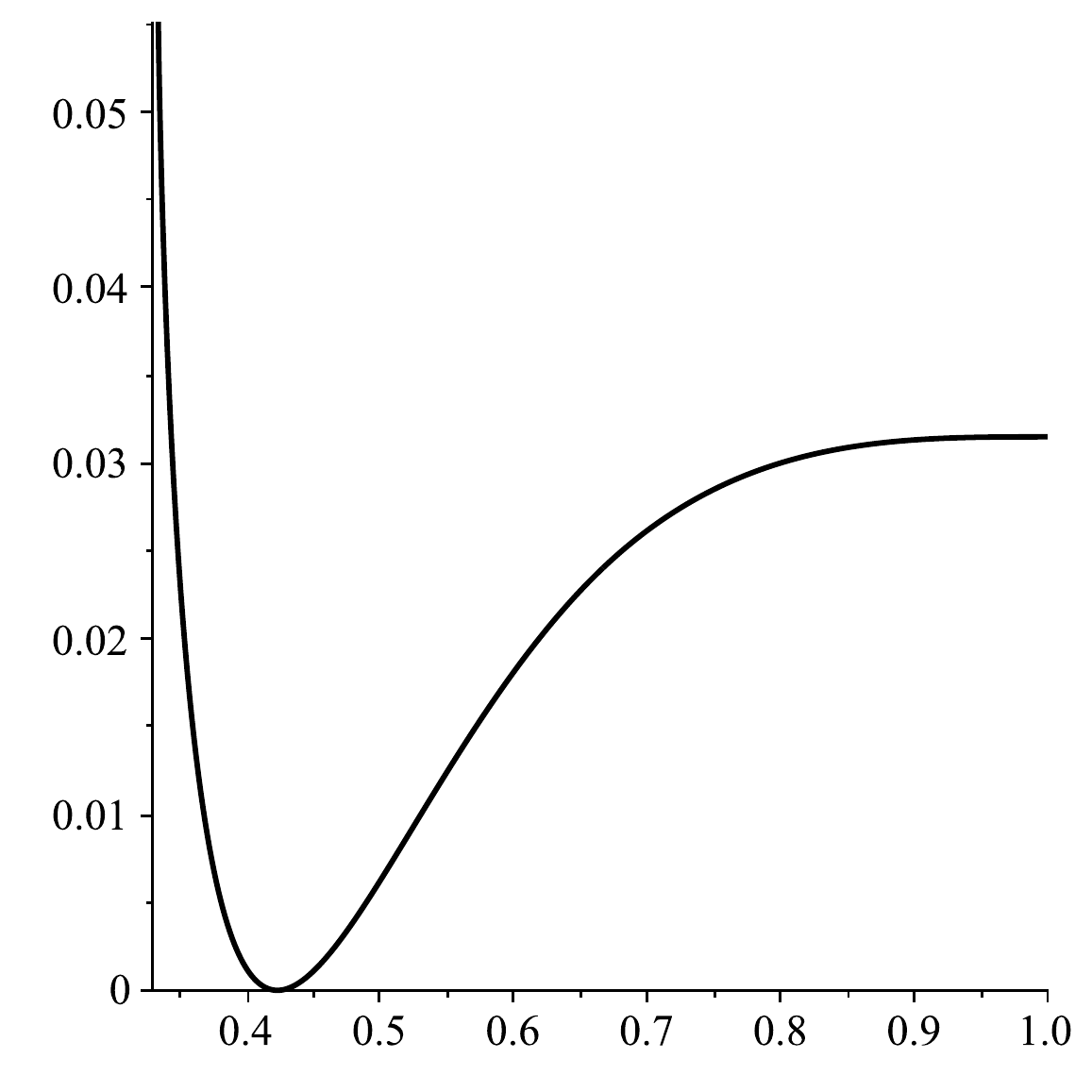}
\vspace{-0.25cm}
\caption{
For $r=2$ and $\alpha=2/3$, the rate function 
$-\xi(\alpha,\beta)$ is plotted as a function 
of $\beta$. 
}
\label{F_rate}
\end{figure}

\subsection{Heuristics}\label{S_heuristics}
We first briefly recall the heuristic for \cref{T_JLTV}
given in Section 6 of \cite{JLTV12}. 
Until $t_*$,
the number of susceptible vertices 
in step $t$ satisfies 
\begin{equation}\label{E_St}
|S_t|\sim{\rm Bin}(n-|S_0|,\pi_t)-t+|S_0|
\end{equation}
where $\pi_t=\P({\rm Bin}(t,p)\ge r)$. 
By the law of large numbers, 
with high probability $|S_t|\approx\E |S_t|$.
A calculation shows that if $|S_0| > (1-1/r)\vartheta$ 
then $\E |S_t| > t$ for $t < n-o(n)$.  On the other hand,  
if $|S_0|=\alpha_r\vartheta$, for some $\alpha<1$,  
then we have $\E |S_{\varphi_\alpha\vartheta}| \approx 0$.
To see this, note that $pt=O[(\vartheta/n)^{1/r}]\ll1$ for $t\le O(\vartheta)$
since $\vartheta\ll n$. Hence (see e.g.\ Section 8 of \cite{JLTV12}) we have
\[
\pi_t=\frac{(pt)^r}{r!}[1+O(pt+1/t)]
\]
and so 
\begin{equation}\label{E_typ}
\E |S_{x\vartheta}|/\vartheta 
\sim 
x^r/r-x+\alpha_r.
\end{equation}

Next, we describe a natural heuristic, using the 
Euler--Lagrange equation, that allows us to anticipate 
the least-cost, deviating trajectories, 
which lead to \cref{T_LD}. 
The proof, given in \cref{S_proof} below, makes this 
rigorous by a discrete analogue 
of the Euler--Lagrange equation established by Guseinov \cite{G04}. 
We think this method will be of use in 
studying the tail behavior of other random processes.

Consider a trajectory $y(x)\ge0$ 
from $\alpha_r$ to $0$ over $[0,\beta]$. 
Suppose that $|S_{x\vartheta}|/\vartheta$ has followed this trajectory
until step $t-1=x\vartheta$. 
In the next step $t$, some vertex $v_t\in S_{t-1}$ is infected. 
There are approximately 
a Poisson with mean $np^r{t-1\choose r-1}\approx x^{r-1}$ number 
of vertices that are neighbors with $v_t$ and 
$r-1$ of the $t-1$ vertices infected
in previous steps $s<t$. Such vertices become susceptible in step $t$. 
Therefore, to continue along this trajectory, we 
require this random variable to take the value 
\[
1+\vartheta[y(x+1/\vartheta)-y(x)]\approx1+y'(x).
\]
(The ``$+1$'' accounts for the vertex $v_t$ 
that is infected in step $t$, and so removed from the susceptible
set.)
This event has log probability approximately 
$-\Gamma^*_{x^{r-1}}(1+y'(x))$, where
\[
\Gamma^*_{\lambda}(u)=-u[1-\lambda/u+\log(\lambda/u)]
\]
is the Legendre--Fenchel transformation of the cumulant-generating function
of a mean $\lambda$ Poisson. 
Hence $|S_{x\vartheta}|/\vartheta\approx y(x)$ on $[0,\beta]$ with 
approximate log probability
\begin{equation}\label{E_int}
\vartheta\int_0^\beta (1+y'(x))[1-\frac{x^{r-1}}{1+y'(x)}+\log \frac{x^{r-1}}{1+y'(x)}]dx
\end{equation}
(cf.\ \eqref{E_logPub2} below). 
Maximizing  this integral is particularly simple, since the integrand depends
on $y'$, but not $y$. 
The Euler--Lagrange equation implies that the least-cost trajectory 
satisfies 
\[
\frac{d}{dx}\log \frac{x^{r-1}}{1+y'(x)}=0
\implies y(x)=(\beta-\alpha_r)(x/\beta)^r-x+\alpha_r
\]
except where possibly the boundary constraint $y(x)\ge0$ might intervene. 
Note that $y(\beta)=0$ for all $\alpha$. 
If $\beta>\alpha$, the above trajectory hits 0 before $\beta$, 
and is negative in between, and so is not viable. 
As it turns out, the least-cost trajectory is 
\begin{equation}\label{E_haty}
\hat y_{\alpha,\beta}(x)=
\begin{cases}
(\beta-\alpha_r)(x/\beta)^r-x+\alpha_r&\beta\le \alpha\\
[x^r/(r\alpha^{r-1})-x+\alpha_r]\1_{x\le \alpha}&\beta>\alpha.
\end{cases}
\end{equation}
By \eqref{E_varphi}, for $\beta>\alpha$, 
the trajectories $\hat y_{\alpha,\beta}$ and $\hat y_{\alpha,\alpha}$
agree on $[0,\alpha]$, and $\hat y_{\alpha,\beta}=0$ thereafter.
Setting $\beta=\varphi_\alpha$, we recover the typical, zero-cost  
trajectory \eqref{E_typ}.
See \cref{F_trajs}.  
Finally, note that substituting \eqref{E_haty} into \eqref{E_int}, 
we obtain $\vartheta\xi(\alpha,\beta)$.

\begin{figure}[h!]
\centering
\includegraphics[scale=0.3]{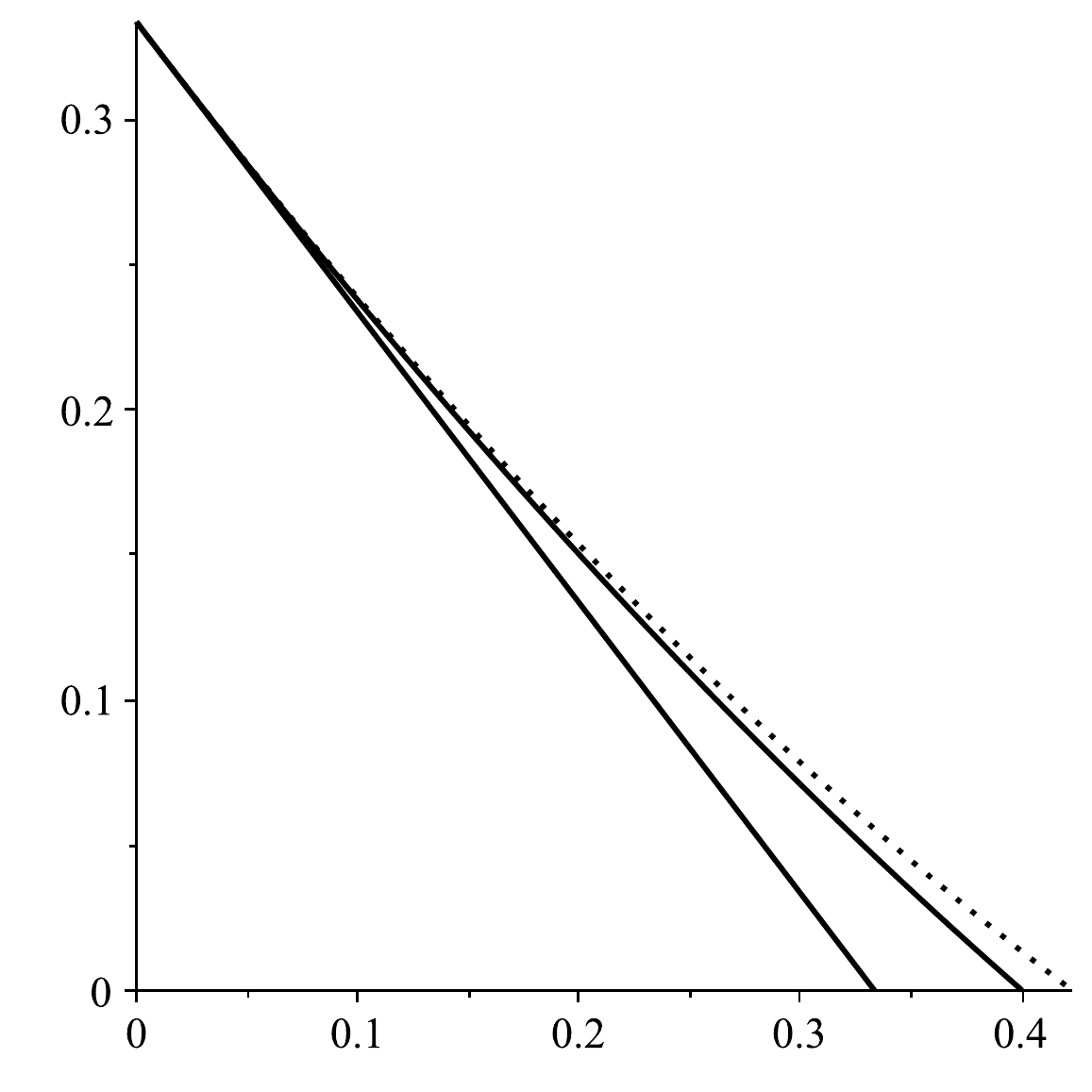}
\includegraphics[scale=0.3]{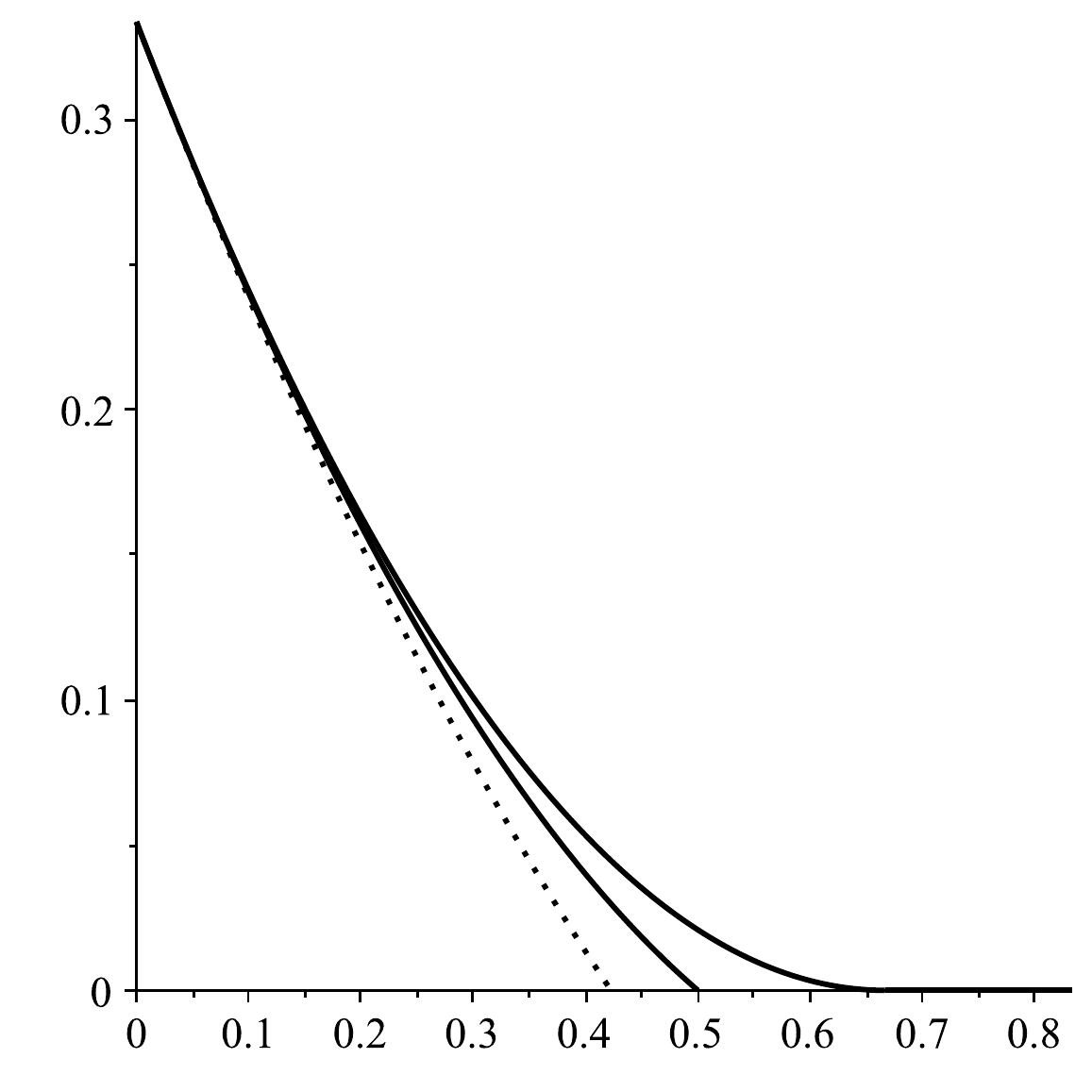}
\vspace{-0.25cm}
\caption{
In both figures, $r=2$ and $\alpha=2/3$. 
The typical, zero-cost trajectory 
appears as a dotted line.
Least-cost, deviating trajectories $\hat y_{\alpha,\beta}$ for $\beta=1/3,2/5$
appear at left and $\beta=1/2,2/3,5/6$ at right. 
}
\label{F_trajs}
\end{figure}

\subsection{Motivation}
We came to this problem in studying {\it $H$-bootstrap percolation,}
as introduced by Balogh et al.\ \cite{BBM12}, where any edge 
in an otherwise infected copy of $H$ in a graph 
becomes infected.
\cref{T_LD}, when $r=2$
and $\vartheta=\Theta(\log n)$, 
plays a role together with \cite{AK18,K17} in finding the precise leading order
asymptotics for the 
 critical probability $p_c$ 
on $\Gnp$, in the case that $H=K_4$, solving a problem in \cite{BBM12}. 

\subsection{An application}
A susceptible set $S_0$
is called {\it contagious} if it infects all of $\Gnp$ eventually  
(i.e., $I_*=[n]$). 
Such sets have been studied for various graphs, 
e.g.\ \cite{COFKR15,FPR18,GS15,M09}, 
and recently for $\Gnp$
by Feige et al.\ \cite{FKR16}. 

By \cref{T_JLTV}, $\Gnp$ has contagious sets
of size $\Theta(\vartheta)$, however, 
there exist contagious sets that are much smaller. 
Upper and lower bounds
for $m(\Gnp,r)$, the minimal size 
of a contagious set in $\Gnp$, 
are obtained in \cite{FKR16}. 
In \cite{AK18}, we showed that 
\begin{equation}\label{E_pc}
p_c\sim
[(r-1)!/n]^{1/r}[(\log{n})/(1-1/r)^2]^{1/r-1}
\end{equation}
 is the sharp
threshold for contagious sets of the smallest
possible size $r$. 

\cref{T_LD}
yields lower bounds for $m(\Gnp,r)$ for $p<p_c$
that sharpen those in \cite{FKR16} by a 
linear, multiplicative factor in $r$. 
Of course, {\it finding} sets of this size (if they exist)
is a difficult and interesting question
(cf.\ the NP-complete problem of target set selection 
from viral marketing \cite{DR01,KKT15}). 

\begin{corollary}\label{T_mGr}
Suppose that, for some  $1\ll \vartheta\ll n$, 
\[
p=[(r-1)!/n]^{1/r}[\vartheta/(1-1/r)^2]^{1/r-1}.
\]
Then, with high probability,   
\[
m(\Gnp,r)
\ge
(1-o(1))
r\vartheta/\log(n/\vartheta).
\] 
\end{corollary}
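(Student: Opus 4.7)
The plan is a first moment (union bound) argument that uses \cref{T_LD} as its main input. By monotonicity of the process in the initial set, $m(\Gnp,r) \le a$ if and only if some set of size exactly $a$ is contagious, so
\[
\P(m(\Gnp,r) \le a) \le \binom{n}{a}\, P(a,n),
\]
and it suffices to show this is $o(1)$ when $a = \lfloor (1-\delta)\, r\vartheta/\log(n/\vartheta)\rfloor$ for arbitrary fixed $\delta > 0$.

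First I would verify that \cref{T_LD} is applicable. A direct calculation gives $np = n^{\gamma_r}((r-1)!)^{1/r}(\gamma_r^2/\vartheta)^{\gamma_r}$, so the hypothesis $1 \ll \vartheta \ll n/\log^r n$ is equivalent to $\log^{r-1} n \ll np \ll n^{\gamma_r}$, and the same calculation yields the clean expressions $t_{\sf c} = \vartheta/\gamma_r^2$ and $a_{\sf c} = \vartheta/\gamma_r$. In particular $a/a_{\sf c} \to 0$. Since $P(a,\cdot)$ is non-increasing, $P(a, n) \le P(a, 2t_{\sf c})$, and applying \cref{T_LD} with $\alpha = 0$ and $\beta = 2$ (using the extension $\xi(0,2) = \xi(0,1)$ noted in the remark after the theorem) yields
\[
\log P(a,n) \le (1+o(1))\, t_{\sf c}\, \xi(0,1).
\]

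The key numerical step is the evaluation $\xi(0,1) = -1/r - (r-2) = -(r-1)^2/r$ using the second case of the formula in \cref{T_LD}, whence $t_{\sf c}\,\xi(0,1) = -r\vartheta$ exactly. For the binomial factor, $\log\binom{n}{a} \le a \log(en/a) \sim a\log(n/\vartheta) = (1-\delta)r\vartheta$ (using $\log(n/a) = \log(n/\vartheta) + O(\log\log(n/\vartheta))$). Combining,
\[
\log \P(m(\Gnp,r) \le a) \le (1-\delta)\, r\vartheta - r\vartheta + o(\vartheta) \to -\infty
\]
since $\vartheta \to \infty$. The main point of interest is not analytic but the algebraic coincidence $t_{\sf c}\,\xi(0,1) = -r\vartheta$ for the chosen $p$: this is precisely what matches the entropy estimate $\log\binom{n}{a}$ to produce the coefficient $r$ in the lower bound $r\vartheta/\log(n/\vartheta)$. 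A minor technical subtlety is that \cref{T_LD} is stated pointwise in $(\alpha,\beta)$ while $n/t_{\sf c} \to \infty$ in our application; the monotonicity bound $P(a, n) \le P(a, 2t_{\sf c})$ sidesteps this so that one can apply the theorem at a fixed pair of parameters.
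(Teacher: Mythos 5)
Your proposal is correct and follows essentially the same route as the paper: a first moment bound over all $\binom{n}{a}$ initial sets, reducing the event ``contagious'' by monotonicity to $\{|A^*|\ge\text{const}\cdot t_{\sf c}\}$, and exploiting the identity $t_{\sf c}\,\xi(0,1)=-r\vartheta$ against the entropy term $\log\binom{n}{a}\sim(1-\delta)r\vartheta$. The only cosmetic difference is that the paper applies \cref{T_LD} directly at $\beta=1$ (bounding the number of sets with $|A^*|\ge t_{\sf c}$), which stays within the theorem's stated range $\beta\in(\varphi_\alpha,1]$ and avoids invoking the remark extending it to $\beta>1$.
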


Since $\vartheta\ll n$, the right hand side  
is $\ll \vartheta$. 
This result follows by an easy union bound, applying \cref{T_LD}
in the case that $\alpha=0$ and $\beta=1$, 
see \cref{A_mGr}.

By \cite{AK18} this lower bound is sharp for $p$ close
to $p_c$ in \eqref{E_pc}. It seems possible that the methods in \cite{AK18} 
could show it is also sharp at least for $\vartheta\le O(\log{n})$.

%%%%%%%%%%%%%%%%%%%%%%%%
%%%%%%%%%%%%%%%%%%%%%%%%
%%%%%%%%%%%%%%%%%%%%%%%%
%%%%%%%%%%%%%%%%%%%%%%%%
\section{Proof of \cref{T_LD}}
\label{S_proof}

Suppose that $|S_0|/\vartheta=\alpha_n\to\alpha_r$. 
For $\beta< \varphi_\alpha$ (resp.\ $\beta> \varphi_\alpha$), 
let ${\mathcal E}_n$ be the event that 
$|I_*|/\vartheta\le\beta$
(resp.\ $|I_*|/\vartheta\ge \beta$). 
Note that ${\mathcal E}_n$ occurs if and only if 
$|S_{x\vartheta}|=0$ at some $x\le \beta$ (resp. $|S_{x\vartheta}|>0$
for all $x< \beta$). 
We first upper bound $\P({\mathcal E}_n)$. 
The lower bound is much easier, and discussed 
at the end of the proof. 
Our strategy is to use a discretized 
Euler--Lagrange equation to identify the
optimal trajectory amongst those realizing ${\mathcal E}_n$.

Let ${\mathcal Y}_{n}$ denote the 
set of trajectories $(y_0,y_1,\ldots,y_m)$ of $|S_{x\vartheta}|/\vartheta$, 
starting from $y_0=\alpha_n$, along 
$m=\Theta[\vartheta/(\log\vartheta)^2]$
points $0=x_0<x_1<\cdots<x_m=\min\{1,t_*/\vartheta\}$
that are compatible with ${\mathcal E}_n$.
That is, we consider trajectories of $|S_t|$ 
realizing ${\mathcal E}_n$
along a discrete set of points
until time $t=\vartheta$
or the first time $t_*$ that $S_t=0$, if that comes first. 
Moreover, we assume that the $x_i$ are 
spaced as evenly as possible, 
subject to all $x_i\vartheta\in\Z$.  
By standard 
tail estimates, we may assume that in ${\mathcal Y}_{n}$ all 
$y_i\le O(1)$. 
Indeed, for $t\le O(\vartheta)$, 
by \eqref{E_St} and 
Chernoff's bound, 
\[
\log \P( |S_{t}| \ge (1+\delta)\vartheta )
\le -O(\delta^2 \vartheta).
\]
Therefore, for $A$ sufficiently large, the log
probability that any $|S_t|>A\vartheta$ while $t\le \vartheta$
is less than $\vartheta\xi(\alpha,\beta)$. 
Hence $|{\mathcal Y}_{n}|\le (A\vartheta)^m$, and so by the choice of $m$,
\begin{equation}\label{E_logPub}
\frac{1}{\vartheta}\log \P({\mathcal E}_{n})
\le o(1)+\frac{1}{\vartheta}\sum_{i=0}^{m-1} \log
\P(\frac{|S_{x_{i+1}\vartheta}|}{\vartheta}=\hat y_{i+1} | \frac{|S_{x_{i}\vartheta}|}{\vartheta}=\hat y_{i})
\end{equation}
where $\hat y$ maximizes the right hand side over $y\in{\mathcal Y}_{n}$. 

Next, we aim to identify $\hat y$. 
By \eqref{E_St}, note that 
\[
\Delta |S_{x_i\vartheta}|\sim {\rm Bin}(n-|S_0|,\Delta\pi(x_i\vartheta))-\vartheta\Delta x_i,
\]
where $\Delta$ is the forward difference operator, 
$\Delta g_i=g_{i+1}-g_i$. 
Hence
\begin{multline*}
\P(\frac{|S_{x_{i+1}\vartheta}|}{\vartheta}=y_{i+1} | \frac{|S_{x_{i}\vartheta}|}{\vartheta}=y_{i})\\
\le 
\left(e\frac{n\Delta\pi(x_i\vartheta)}{\vartheta(\Delta x_i+\Delta y_i)}\right)^{\vartheta(\Delta x_i+\Delta y_i)}
[1-\Delta\pi(x_i\vartheta)]^{n-|S_0|-\vartheta(\Delta x_i+\Delta y_i)}.
\end{multline*}

We require a technical result. The proof is elementary, though
somewhat tedious, see \cref{S_lem} below. 
Note that by \eqref{E_p}, $1\ll\vartheta\ll1/p$.

\begin{lem}\label{L_delpi}
We have that 
\[
\frac{rn}{\vartheta}\frac{\Delta\pi(x_i\vartheta)}{\Delta(x_i^r)}
=1+O(p\vartheta+\frac{1}{\log\vartheta})\sim 1.
\]
\end{lem}

Using this (and the standard bound $1-x\le e^{-x}$), it follows that 
\begin{multline*}
\frac{1}{\vartheta}\sum_{i=0}^{m-1} \log
\P(\frac{|S_{x_{i+1}\vartheta}|}{\vartheta}= y_{i+1} | \frac{|S_{x_{i}\vartheta}|}{\vartheta}= y_{i})\\
\le o(1)+\sum_{i=0}^m(\Delta x_i+\Delta y_i)[
1-\frac{\Delta(x_i^r)/r}{\Delta x_i+\Delta y_i}+\log\frac{\Delta(x_i^r)/r}{\Delta x_i+\Delta y_i}].
\end{multline*}
Note that $\log x-x$ is increasing for $x\in(0,1]$, and $\Delta(x_i^r)/r\le x_{i+1}^{r-1}\Delta x_i$. 
Therefore, by \eqref{E_logPub}, we find that 
\begin{equation}\label{E_logPub2}
\frac{1}{\vartheta}\log \P({\mathcal E}_{n})
\le o(1)+\sum_{i=0}^{m-1}f(x_{i+1},\Delta \hat y_i/\Delta x_i)\Delta x_i 
\end{equation}
where
\[
f(u,v)=(1+v)[1-\frac{u^{r-1}}{1+v}+\log\frac{u^{r-1}}{1+v}]
\]
(cf.\ \eqref{E_int} above). 
In obtaining an upper bound for $\P({\mathcal E}_n)$, 
we may now lift the restriction that all $y_i\vartheta\in\Z$. 
Then, by the discrete Euler--Lagrange equation in \cite{G04} (see Theorem 5), 
the maximizer $\hat y$ satisfies
\[
\Delta f_v(x_{i+1},\Delta \hat y_i/\Delta x_i)
\equiv 0.
\]
Since 
\[
f_v(u,v)=\log\frac{u^{r-1}}{1+v}
\]
this implies that
$1+\Delta \hat y_i/\Delta x_i=bx_{i+1}^{r-1}$, 
for some constant $b$, between any two points $x_j<x_k$
where $\hat y_i>0$ for $j<i<k$. On the other hand, if both 
$\hat y_j=\hat y_k=0$, then necessarily $\hat y_i=0$
for $j<i<k$. 
By standard results on the Euler approximation  
of differential equations (see e.g.\ Theorem 7.3 and 7.5 in Section I.7 of \cite{HNW93}), 
it follows that the discrete derivative $\Delta \hat y_i/\Delta x_i$
is within $O(1/m)$ of the function $bx^{r-1}-1$, for some $b$,  
on all segments where $\hat y_i>0$. Hence, by \eqref{E_logPub2}
and the continuity of $f$,  
\begin{equation}\label{E_logPub3}
\lim_{n\to\infty}\frac{1}{\vartheta}\log \P({\mathcal E}_{n})
\le \sup_{y\in {\mathcal Y}}\int_0^{\beta_y}f(x,y'(x))dx
\end{equation}
where ${\mathcal Y}$ is the set of non-negative, continuous functions $y$ 
on some $[0,\beta_y]\subset[0,1]$ 
of the form $bx^r/r-x+a$ on all segments where $y>0$, 
such that $y(0)=\alpha_r$ and 
$\beta_y\le\beta$ and $y(\beta_y)=0$ if $\beta<\varphi_\alpha$
(resp.\ $\beta_y\ge\beta$ if $\beta>\varphi_\alpha$). 

Next, to further reduce the possibilities for the maximizer $\hat y\in{\mathcal Y}$, 
we observe that 
$\hat y$ is necessarily non-increasing. This is intuitive, since the process is sub-critical while the total
number of infected vertices remains less than $\vartheta$.
To see this
formally, note that
$bx^{r-1}-1\le0$ 
for any $x\le 1$ unless $b>1$, and 
\[
f(x,bx^{r-1}-1)=(b-1-b\log b)x^{r-1}
\]
is decreasing in $b>1$. 
It follows that, in either case $\beta<\varphi_\alpha$ or 
$\beta>\varphi_\alpha$, we may assume
$\hat y(\beta_y)=0$. 
Therefore, 
to complete the proof, we need only show that 
\[
\int_0^\beta f(x,y'_{\alpha,\beta}(x))dx=\xi(\alpha,\beta)
\]
(with $y_{\alpha,\beta}$ as in \eqref{E_haty} above)
is increasing in $\beta<\varphi_\alpha$ and decreasing 
in $\beta>\varphi_\alpha$. 
This follows by simple calculus, 
see \cref{A_xi} below. 

The lower bound is much simpler, 
so we omit the details. 
It suffices to consider any possible trajectory for $|S_t|$ sufficiently close to the optimal 
$\hat y\vartheta$. 
For instance, 
consider the case that all 
$|S_{x_i\vartheta}|=\lceil \hat y(x_i)\vartheta\rceil$ if $\beta<\varphi_\alpha$
(resp.\ 
$|S_{x_i\vartheta}|=\lceil(\hat y(x_i)+\Delta x_i)\vartheta\rceil$
if $\beta>\varphi_\alpha$). Note that ``$+\Delta x_i$'' is added in the latter case to ensure
that $|S_t|>0$ between increments.

\appendix

\section{Technical results}

\subsection{Shape of $\xi$}\label{A_xi}

In this section, we prove the claim that $\xi(\alpha,\beta)$ is increasing 
in $\beta\in[\alpha_r,\varphi_\alpha)$ and decreasing in $\beta\in(\varphi_\alpha,1]$. 
First note that, for $\beta\le\alpha$, 
\[
\frac{\partial }{\partial\beta}\xi(\alpha,\beta)
=
\log\frac{\beta^r/r}{\beta-\alpha_r}+r(1-\alpha_r/\beta)-\beta^{r-1}.
\]
Therefore, if $\beta\in[\alpha_r,\varphi_\alpha]$, by \eqref{E_varphi} and $\log x\ge 1-1/x$,
\[
\frac{\partial }{\partial\beta}\xi(\alpha,\beta)
\ge
\frac{1-\beta^{r-1}}{\beta^r/r}(\alpha_r-\beta+\beta^r/r)\ge0.
\] 
On the other hand, if $\beta\in[\varphi_\alpha,\alpha]$, 
by \eqref{E_varphi} and $\log x\le x-1$, 
\[
\frac{\partial }{\partial\beta}\xi(\alpha,\beta)
\le
\frac{(r-1)(\alpha-\beta)}{\beta(\beta-\alpha_r)}(\alpha_r-\beta+\beta^r/r)\le 0. 
\]
Finally, if $\beta\in[\alpha,1]$, note that 
\[
\frac{\partial }{\partial\beta}\xi(\alpha,\beta)
=1-\beta^{r-1}+(r-1)\log\beta
\le 1-\beta+\log\beta\le0. 
\]

\subsection{Lower bound for $m(\Gnp,r)$}\label{A_mGr}

\begin{proof}[Proof of \cref{T_mGr}]
For $\delta>0$, let $t_\delta=(1-\delta)r\vartheta/\log(n/\vartheta)$.
We show that, with high probability, $\Gnp$ 
has no contagious sets  
smaller than $t_\delta$.
Note that 
\[
\xi(0,1)\vartheta/(1-1/r)^2=-r\vartheta.
\]
The expected number of subsets $S_0\subset[n]$
of size $|S_0|=t_\delta$ which if initially susceptible
cause $|I_*|\ge \vartheta/(1-1/r)^2$ vertices to 
be infected eventually is at most 
\[
{n\choose t_\delta} e^{-r\vartheta (1+o(1))}
\le
(ne/t_\delta)^{t_\delta}
e^{-r\vartheta (1+o(1))}
=e^{-r\vartheta \psi}, 
\] 
where
\[
\psi=
1+o(1)
-(1-\delta)\log(ne/t_\delta)/\log(n/\vartheta).
\]
Since
\[
\log(ne/t_\delta)\le \log(n/\vartheta)
+O\left(\log\log(n/\vartheta)\right), 
\]
$\psi>0$ for all large $n$,
and the result follows. 
\end{proof}

\subsection{Increments of $\pi$}\label{S_lem}

\begin{proof}[Proof of \cref{L_delpi}]
Recall that 
\[
m=\Theta(\frac{1}{\Delta x_i})=\Theta(\frac{\vartheta}{(\log\vartheta)^2}).
\]

When $i=0$, we have $\Delta \pi(x_i\vartheta)=\pi(x_1\vartheta)$
since $x_0=0$. By the estimates 
discussed in \cref{S_heuristics}, 
\[
\frac{rn}{\vartheta}\pi(x_1\vartheta)
= x_1^r[1+O(p\vartheta+\frac{1}{(\log\vartheta)^2})].
\]

Next, we assume that $i\ge1$.
Then 
$x_{i+1}\le O(x_i)$ and, for all $\ell\ge r$, 
\begin{equation}\label{E_delxiell}
1\le \frac{\Delta(x_i^\ell)}{\ell x_i^{\ell-1}\Delta x_i}\le O(1)^\ell.
\end{equation}

For the lower bound, first note that 
\[
\P({\rm Bin}(x_{i+1}\vartheta,p)>r)> \P({\rm Bin}(x_{i}\vartheta,p)>r)
\]
and so 
\[
\Delta\pi(x_i\vartheta)
> \Delta\P({\rm Bin}(x_{i}\vartheta,p)=r).
\]

Hence, using \eqref{E_p} and 
\eqref{E_delxiell} (and the standard bounds $(n-k)^k\le {n\choose k}k!\le n^k$
and $(1-x)^y\ge 1-xy$) we find 
\begin{align*}
\frac{rn}{\vartheta}\Delta\pi(x_i\vartheta)
&\ge 
(1-p)^{x_{i}\vartheta-r}[
(x_{i+1}-\frac{r}{\vartheta})^r
(1-p)^{\vartheta\Delta x_{i}}-
x_i^r]\\
&\ge 
\Delta (x_{i}^r)
(1-p\vartheta)[1-
\frac{x_{i+1}^r}{\Delta (x_{i}^r)}(p\vartheta\Delta x_{i}+\frac{r^2}{x_{i+1}\vartheta})]\\
&=
\Delta (x_{i}^r)
[1-
O(p\vartheta+\frac{1}{(\log\vartheta)^2})].
\end{align*}

The upper bound requires slightly more attention.
Note that, by the choice of $m$, 
$\log m\ll x_1\vartheta$. Therefore $\log m\le x_i\vartheta$
for all large $n$. 
Hence, for all large $n$, 
\[
\Delta\pi(x_i\vartheta)
<\P({\rm Bin}(x_{i+1}\vartheta,p)>\log m)
+\sum_{\ell=r}^{\log m}\Delta\P({\rm Bin}(x_{i}\vartheta,p)=\ell).
\]
Since $p\vartheta\ll1\ll m$, for all large $n$, 
\[
\P({\rm Bin}(x_{i+1}\vartheta,p)> \log m)
\le \vartheta(x_{i+1}p\vartheta)^{1+\log m}.
\]
Therefore, by \eqref{E_p}, \eqref{E_delxiell} and the choice of $m$, 
\[
\frac{rn}{\vartheta \Delta (x_{i}^r)}
\P({\rm Bin}(x_{i+1}\vartheta,p)> \log m)
\le O( \frac{n}{\Delta x_i}(p\vartheta)^{1+\log m})
\ll p\vartheta.
\]
Next, by \eqref{E_delxiell}, it follows
that, for all $\ell\le \log m$ and large $n$, 
\begin{align*}
\Delta\P({\rm Bin}(x_{i}\vartheta,p)=\ell)
&\le\frac{(p\vartheta)^\ell}{\ell!}[x_{i+1}^\ell-x_i^\ell(1-\frac{\ell}{x_i\vartheta})^\ell]
\\
&\le\frac{(p\vartheta)^\ell}{\ell!}\Delta(x_i^\ell)(1
+\frac{\ell}{\vartheta\Delta x_i}).
\end{align*}
Therefore, by \eqref{E_p} and \eqref{E_delxiell}, for all large $n$, 
\begin{align*}
\frac{rn}{\vartheta}\sum_{\ell=r}^{\log m}\Delta\P({\rm Bin}(x_{i}\vartheta,p)=\ell)
&\le
\sum_{\ell=r}^{\log m}\frac{r!(p\vartheta)^{\ell-r}}{\ell!}\Delta(x_i^\ell)
(1+\frac{\ell}{\vartheta\Delta x_i})
\\
&\le
\Delta(x_i^r)
(1+\frac{\log m}{\vartheta\Delta x_i})[1
+\sum_{\ell>0}O(p\vartheta)^\ell]\\
&\le \Delta(x_i^r)
[1+O(p\vartheta+\frac{1}{\log\vartheta})].
\end{align*}

Altogether, we find that
\[
\frac{rn}{\vartheta}\Delta\pi(x_i\vartheta)
=
\Delta(x_i^r)[1+O(p\vartheta+\frac{1}{\log\vartheta})]
\]
as claimed. 
\end{proof}

\providecommand{\bysame}{\leavevmode\hbox to3em{\hrulefill}\thinspace}
\providecommand{\MR}{\relax\ifhmode\unskip\space\fi MR }
% \MRhref is called by the amsart/book/proc definition of \MR.
\providecommand{\MRhref}[2]{%
  \href{http://www.ams.org/mathscinet-getitem?mr=#1}{#2}
}
\providecommand{\href}[2]{#2}

\end{document}